\documentclass[twoside]{amsart}
\usepackage{mathrsfs}
\usepackage{amsfonts}
\usepackage{amssymb}
\usepackage{amssymb}
\usepackage{amssymb}
\usepackage{amssymb}
\usepackage{amssymb}
\usepackage{amssymb}
\usepackage{amssymb}
\usepackage{amssymb}
\usepackage{amsmath}
\usepackage{mathdots}
\usepackage{xcolor}
\usepackage[all,cmtip]{xy}

\usepackage{lastpage}

\RequirePackage{amsmath} \RequirePackage{amssymb}
\usepackage{amscd,latexsym,amsthm,amsfonts,amssymb,amsmath,amsxtra}
\usepackage[colorlinks=true, urlcolor=blue, linkcolor=red,anchorcolor=blue,citecolor=blue]{hyper ref}

    \newcommand{\BC}{{\mathbb {C}}}

        \newcommand{\bC}{{\textbf {C}}}

     \newcommand{\CF}{{\mathcal {F}}}

    \newcommand{\CS}{{\mathcal {S}}}

    \newcommand{\lenth}{{\mathrm {\lenth}}}

    \newcommand{\Hom}{{\mathrm{Hom}}} 
    \newcommand{\Ind}{{\mathrm{Ind}}}

    \theoremstyle{plain}

    \newtheorem{thm}{Theorem}[section] 
      \newtheorem{proposition}[thm]{Proposition}

    \numberwithin{equation}{section}

\usepackage[top=1in, bottom=1in, left=1.25in, right=1.25in]{geometry}

\begin{document}
\title{A note on small theta lift}

\begin{abstract}
In this note, we use certain sesquilienar form to realize small theta lift for even orthogonal-symplectic and unitary dual pairs over p-adic fields.
\end{abstract}

	\author{Jingsong Chai}
	\address{School of Mathematics, Physics and Finance \\ Anhui Polytechnic University \\Wuhu, Anhui,  241000\\China}
	\email{jingsongchai@hotmail.com}

	\subjclass[2010]{22E50,20C33}
	\keywords{Local theta correspondence, Unitary representation, Positivity}

	\maketitle
\section{Introduction}

Let $(G',G)$ be an irreducible type I reductive dual pair inside the symplectic group $Sp(F)$, where $F$ is a p-adic field. Let $\tilde{S}p(F)$ be the metaplectic two fold cover of $Sp(F)$. For any subgroup $H$ of $Sp(F)$ we let $\tilde{H}$ denote its inverse image in $\tilde{S}p(F)$. Let $(\omega, \mathcal{Y})$ be the Weil representation of $\tilde{Sp}(F)$. Let $(\pi', V_{\pi'})$ be any irreducible unitary representation of $\tilde{G}'$. Let $\mathcal{Y}^\infty, H_{\pi'}^\infty$ denote the space of smooth vectors. Then $\tilde{G}'$ acts on the algebraic tensor product $\mathcal{Y}^\infty\otimes V_{\pi'}^\infty$ by $\omega\otimes \pi'$. The space $\mathcal{Y}^\infty\otimes V_{\pi'}^\infty$ is naturally endowed with an inner product $<,>$ coming from the unitary structure of $\mathcal{Y}$ and $H_{\pi'}$. Consider the new sesquilinear form $<,>_{\pi'}$ defined by
\[
<\Phi,\Phi'>_{\pi'}=\int_{\tilde{G}'}<\Phi, (\omega\otimes \pi')(g')\Phi'>dg'
\]
for $\Phi,\Phi'\in \mathcal{Y}^\infty\otimes V_{\pi'}^\infty$. Assume $(G',G)$ and $\pi'$ are such that the above integral converges for all $\Phi,\Phi'$. Let $R$ denote the radical of $<,>_{\pi'}$. Then $<,>_{\pi'}$ factors to define a nondegenerate sesquilinear form on
\[
H(\pi'):=(\mathcal{Y}^\infty \otimes V_{\pi'}^\infty)/R.
\]
It is clear that $\tilde{G}$ acting on the first factor of $\mathcal{Y}^\infty\otimes V_{\pi'}^\infty$ preserves $<,>_{\pi'}$. Hence we have an action of $\tilde{G}$ on $V_{\pi'}$. The following conjecture is proposed by Li in \cite{L90}.  \\

$\mathbf{Conjecture}$. Suppose $<,>_{\pi'}$ is defined and nonzero. Then

(i) $<,>_{\pi'}$ is nonnegative, hence $H(\pi')$ is unitary.

(ii) $H(\pi')$ defines an irreducible unitary representation on the completion of $H(\pi')$ with respect to the inner product given by $(i)$.

(iii) The map $\pi'\to H(\pi'^\vee)$, where $\pi'^\vee$ denote the contragredient of $\pi'$, coincides with Howe's duality correspondence.  \\

Now let $(G',G)$ be either even orthogonal-symplectic or unitary dual pair. Let $\pi$ be an irreducible smooth representation of $G'$ with its contragredient $\pi^\vee$. Let's ignore the splitting of $G'\times G$ inside $\tilde{Sp}(F)$ for simplicity. Choose a nonzero element $\ell\in \Hom_{G'(W)\times G'(W)\times G(V)} (\omega\otimes \bar{\omega} \otimes \pi \otimes \pi^\vee , \BC)$. Define
\[
R:=\{ \varphi\otimes v^\vee\in \omega\otimes \pi^\vee: \ell(\varphi\otimes \varphi'\otimes v\otimes v^\vee)=0 \ \ \forall \varphi'\in \bar{\omega}, v\in \pi  \}
\]
be the radical of $\ell$. Put $H_{\ell,\psi}(\pi):= (\omega\otimes \pi^\vee)/R$.
The main result of the note is the following.
\begin{thm}
 $ H_{\ell,\psi}(\pi)$ is isomorphic to the small theta lift $\theta_\psi(\pi)$.
\end{thm}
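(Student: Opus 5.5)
We will identify $H_{\ell,\psi}(\pi)$ with $\theta_\psi(\pi)$ by comparing both to quotients of the big theta lift. Recall that the big theta lift $\Theta_\psi(\pi)$ is defined as the maximal $\pi$-isotypic quotient of $\omega$, written $\omega\otimes\pi^\vee \to \Theta_\psi(\pi)$. Concretely, $\Theta_\psi(\pi)=(\omega\otimes\pi^\vee)_{G'}$ is the space of $G'$-coinvariants of $\omega\otimes\pi^\vee$ (with $G'$ acting diagonally), viewed as a $G$-module. By Howe duality (Waldspurger/Gan--Takeda for $p$-adic fields), $\Theta_\psi(\pi)$ is either zero or has a unique irreducible quotient $\theta_\psi(\pi)$.

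\textbf{Step 1: $\ell$ factors through the big theta lift.} Since $\ell$ is invariant under the first copy of $G'$ acting on $\omega\otimes\pi^\vee$, the radical $R$ contains the $G'$-coinvariant kernel $\mathrm{span}\{x-g'x\}$. Hence $H_{\ell,\psi}(\pi)$ is a $G$-equivariant quotient of $\Theta_\psi(\pi)$; in particular, $\ell\neq 0$ forces $\Theta_\psi(\pi)\neq 0$. Symmetrically, $\ell$ induces a $G$-invariant pairing
\[
B:\Theta_\psi(\pi)\times \overline{\Theta_\psi(\pi)}\to\BC,
\]
where the second factor is the analogous coinvariant space of $\bar\omega\otimes\pi$. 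This second space is the complex conjugate of $\Theta_\psi(\pi)$ and hence a quotient of it. The space $H_{\ell,\psi}(\pi)$ is exactly $\Theta_\psi(\pi)$ modulo the left kernel of $B$.

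\textbf{Step 2: the quotient is irreducible.} The pairing $B$ gives a nonzero $G$-map
\[
\Theta_\psi(\pi)\to \overline{\Theta_\psi(\pi)}^{\,\vee},
\]
whose image is isomorphic to $H_{\ell,\psi}(\pi)$. The image is a nonzero quotient of $\Theta_\psi(\pi)$, so its cosocle is $\theta_\psi(\pi)$. It is also a submodule of the smooth dual of a quotient of $\overline{\Theta_\psi(\pi)}$, and we must show it is irreducible.

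We would argue as follows. Any irreducible submodule $\sigma$ of the image pairs nontrivially with $\overline{\Theta_\psi(\pi)}$. This gives a nonzero map $\overline{\Theta_\psi(\pi)}\to\sigma^\vee$, so $\bar\sigma^\vee$ is a quotient of $\Theta_\psi(\pi)$, and Howe duality gives $\bar\sigma^\vee\cong\theta_\psi(\pi)$. By the MVW involution together with conjugation (the standard fact $\overline{\theta(\pi)}^{\vee}\cong\theta(\bar\pi^\vee)$, combined with $\bar\pi^\vee$ related to $\pi$ through the MVW twist), this identifies the socle of the image as $\theta_\psi(\pi)$ with multiplicity one. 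The multiplicity-one statement comes from
\[
\dim\Hom_G(\Theta_\psi(\pi),\theta_\psi(\pi))=1.
\]
Thus the image has irreducible socle and irreducible cosocle, both isomorphic to $\theta_\psi(\pi)$, and the composite $\Theta_\psi(\pi)\twoheadrightarrow \text{image}$ realizes the form. To conclude that the image equals $\theta_\psi(\pi)$, we use that the pairing is nondegenerate on the image, which makes the image self-dual up to conjugation. A finite-length module with simple socle $\tau$, simple cosocle $\tau$, and a nondegenerate invariant pairing whose restriction to the socle is nonzero must be $\tau$ itself. Indeed, the socle pairs trivially with the radical, so it pairs nontrivially only with the top. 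Multiplicity one of $\tau$ in $\Theta$ then forces the socle to coincide with the top.

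\textbf{Main obstacle.} The hardest point is this last multiplicity argument. It needs the fact that $\theta_\psi(\pi)$ occurs only once as a subquotient of $\Theta_\psi(\pi)$, or at least that the socle and cosocle copies coincide, and this is stronger than Howe duality alone. We would first try to obtain it from Kudla's filtration / the Gan--Takeda analysis of $\Theta_\psi(\pi)$, using that $\Theta_\psi(\pi)$ has finite length and that $\theta_\psi(\pi)$ appears with multiplicity one (as known from the seesaw and doubling see-saw arguments). Alternatively, we would realize $\ell$ through the doubling zeta integral, via the seesaw identity $\omega\otimes\bar\omega|_{G'\times G'}$ against the degenerate principal series. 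In that realization the pairing can be shown to be nonzero exactly on the unique irreducible quotient, by the results of Gan--Ichino on the local theta correspondence and doubling.
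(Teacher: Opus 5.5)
\textbf{There is a genuine gap in your Step 2.} Your Step 1 agrees with the paper.

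Your argument reduces irreducibility of $H_{\ell,\psi}(\pi)$ to the claim that $\theta_\psi(\pi)$ occurs only once as a Jordan--H\"older constituent of $\Theta_\psi(\pi)$, or at least that the socle and cosocle copies coincide. You flag this yourself but do not prove it. It is not a consequence of Howe duality or of finite length. The fact $\dim\Hom_G(\Theta_\psi(\pi),\theta_\psi(\pi))=1$ only controls the cosocle.

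Without multiplicity one, your lemma (``simple socle $\tau$, simple cosocle $\tau$, nondegenerate invariant pairing $\Rightarrow$ the module is $\tau$'') is false. A non-split self-extension of $\tau$ by $\tau$, paired with its contragredient, is a counterexample. So your proof does not close.

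There is also a secondary error. The second factor of your pairing is the $G'$-coinvariant space of $\bar\omega\otimes\pi$, which is a big theta lift of $\pi^\vee$ (up to the character twists). It is not $\overline{\Theta_\psi(\pi)}$. The two agree only when $\overline{\pi^\vee}\cong\pi$, i.e.\ when $\pi$ is Hermitian, and that is not assumed. So your identification of the socle also needs repair.

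The missing idea, used in the paper, avoids any structural statement about $\Theta_\psi(\pi)$ and instead shows that $\ell$ is unique up to scalar:
\begin{itemize}
\item By the doubling see-saw, the space of such $\ell$ equals $\Hom_{G'\times G'}(\Theta_{V,\mathbf{W}}(\chi_W),\pi\otimes\pi^\vee\chi_V)$.
\item Rallis identifies $\Theta_{V,\mathbf{W}}(\chi_W)$ with a submodule of the Siegel degenerate principal series $I_{\mathbf{P}}(-l/2,\chi_V)$.
\item Droschl's multiplicity-one theorem then gives dimension at most one.
\end{itemize}
Next, one exhibits a nonzero element whose radical is exactly the kernel $R_1$ of $\omega\otimes\pi^\vee\twoheadrightarrow\theta_\psi(\pi)$. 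Take a nonzero $G$-invariant pairing $\theta_\psi(\pi)\otimes\theta_{\bar\psi}(\pi^\vee\chi_V)\to\chi_W$ (GKT, Cor.~18.4), which is nondegenerate since both factors are irreducible, and pull it back. Since $\ell$ is proportional to this element, $R=R_1$ and $H_{\ell,\psi}(\pi)\cong\theta_\psi(\pi)$.

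Your closing remark about the doubling see-saw points in this direction. However, the decisive input is the one-dimensionality of the space of $\ell$'s, not any property of the pairing on $\Theta_\psi(\pi)$.
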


$\bullet$ We have to restrict to even orthogonal-symplectic and unitary daul pairs since in the proof we used a result of Droschl(\cite{D23}), which is proved only for these dual pairs. Our method works in principal for other dual pairs as well once Droschl's result is available. \\

$\bullet$ In Li's conjecture, the sesquilinear form $<,>_{\pi'}$ is an example of $\ell$ in the above theorem. Thus our result can be viewed as an extension of part (ii) and (iii) in Li's conjecture to general irreducible smooth representations.  \\

\noindent \textbf{Acknowledgement:} We would like to thank Lu Hengfei for helpful discussions. This work is supported by a start up funding of AHPU and NSFC(no.12571010).
\\

\section{Proof of the main result}

Let $E$ be either $F$ or a quadratic extension of $F$. We fix a nontrivial additive character $\psi$ of F. Let $V$ and $W$ be vector spaces over $E$ equipped with nondegenerate sesquilinear forms
\[
(\cdot,\cdot):V\times V \to E \ \ \ \ \ \ \and \ \ \ \ \ \ <\cdot,\cdot>:W\times W\to E
\]
of opposite signs. Put
\[
m=\dim V  \ \ \ \ \ \ \and \ \ \ \ \ \ n=\dim W.
\]
We will consider the dual pair $(G'(W), G(V))$ arising from $V, W$, and will distinguish certain cases in the following table.

\begin{center}
\begin{tabular}{|c c c c c|} 
 \hline
 Cases &  & $G'(W)$ & $G(V)$ & $l$ \\ [0.5ex] 
 \hline
 $A$ & [E:F]=2 & $U_m$ & $U_n$ & $n-m$ \\ 
 \hline
 $C''$ & m even & $O_m$ & $Sp_n$ & $n-m+1$ \\
 \hline
 $D$ & n even & $Sp_m$ & $O_n$ & $n-m-1$ \\ [1ex] 
 \hline
\end{tabular}
\end{center}

Let $\boldsymbol{\chi}=(\chi_V,\chi_W)$ be a pair of characters of $E^\times$ as in section 3.2 in \cite{GI14}. Associated to these datum, there is a splitting
\[
\iota=\iota_{V,W,\boldsymbol{\chi},\psi}: G(W) \times H(V) \to Mp(\mathbb{W}),
\]
where $\mathbb{W}=V\otimes_E W$.

Let $\omega_\psi$ be the Weil representation of $Mp(\mathbb{W})$. The above splitting then induces a representation, denoted as $\omega_{V,W,\boldsymbol{\chi},\psi}$, of $G(W)\times H(V)$. If $\pi$ is an irreducible smooth (genuine) representation of $G'(W)$, the maximal $\pi$-isotypic quotient of $\omega_{V,W,\boldsymbol{\chi},\psi}$ is of the form $\pi\otimes \Theta_{V,W,\boldsymbol{\chi},\psi}(\pi)$, where $\Theta_{V,W,\boldsymbol{\chi},\psi}(\pi)$ is a smooth representation of $H(V)$. We will call $\Theta_{V,W,\boldsymbol{\chi},\psi}(\pi)$ as the big theta lift of $\pi$. It is always of finite length. We use $\theta_{V,W,\boldsymbol{\chi},\psi}(\pi)$ to denote the maximal semisimple quotient of $\Theta_{V,W,\boldsymbol{\chi},\psi}(\pi)$, and call it small theta lift of $\pi$.

The following fundamental result, called Howe duality, was conjectured by Howe (\cite{How79,How89}). It was proved by Waldspuger (\cite{Wal90}) when $p\neq 2$, and in full generality by Gan-Takeda (\cite{GT16}) and Gan-Sun (\cite{GS17}).

\begin{thm}
$\theta_{V,W,\boldsymbol{\chi},\psi}(\pi)$ is either zero or irreducible. Moreover, if for irreducible smooth representations $\pi_1, \pi_2$ of $G'(W)$, we have $\theta_{V,W,\boldsymbol{\chi},\psi}(\pi_1)\cong \theta_{V,W,\boldsymbol{\chi},\psi}(\pi_2)$, then $\pi_1\cong \pi_2$.
\end{thm}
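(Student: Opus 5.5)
This is the Howe duality theorem, which the paper cites from \cite{Wal90,GT16,GS17} rather than proves. If I were to prove it, I would follow the strategy of Gan--Takeda, supplemented by the Gan--Sun argument for the injectivity statement. The plan splits the claim into two parts. Part (a) is irreducibility: $\theta_{V,W,\boldsymbol{\chi},\psi}(\pi)$ is irreducible whenever it is nonzero. Part (b) is injectivity: $\theta(\pi_1)\cong\theta(\pi_2)\neq 0$ forces $\pi_1\cong\pi_2$. Both parts reduce to the single multiplicity-one statement
\[
\dim \Hom_{G'(W)\times G(V)}(\omega_{V,W,\boldsymbol{\chi},\psi},\pi\otimes\sigma)\cdot\dim\Hom_{G'(W)\times G(V)}(\omega_{V,W,\boldsymbol{\chi},\psi},\pi'\otimes\sigma)\le 1
\]
for $\pi\not\cong\pi'$ or $\pi=\pi'$, together with its symmetric version in the $G(V)$-variable. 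Indeed, if $\Theta(\pi)$ had two distinct irreducible quotients, or one irreducible quotient with multiplicity two, this bound would be violated.

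The first step is to treat $\pi$ tempered (in fact discrete series first, then tempered), using the doubling see-saw. Consider the dual pairs $G'(W)\times G'(W^-)\subset G'(W\oplus W^-)$ and $G(V)\times G(V)\supset \Delta G(V)$. By the see-saw identity, a nonzero map $\Theta(\pi)\to\sigma_1\oplus\sigma_2$ gives $G'(W)\times G'(W)$-equivariant maps from the big theta lift of the trivial representation of $G(V)$ to the doubled group. By Rallis's description, this big theta lift is a submodule of a degenerate principal series $I(s)$ of $G'(W\oplus W^-)$.

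The second step analyses $I(s)$ restricted to $G'(W)\times G'(W)$. I would use the Bruhat (Mackey) filtration by $P\backslash G'(W\oplus W^-)/G'(W)\times G'(W)$. The open orbit contributes $C_c^\infty(G'(W))$, and the closed orbits are induced from smaller Levi data. For tempered $\pi$, a Jacquet-module and central-exponent comparison should show that the closed-orbit pieces cannot contribute $\pi\otimes\pi^\vee$ at the relevant $s$. Hence
\[
\dim\Hom_{G'(W)\times G'(W)}(I(s),\pi\otimes\pi^\vee)\le 1,
\]
which yields multiplicity one. The exponent calculations here are governed by Kudla's filtration of Jacquet modules of $\omega$.

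The third step extends this to all irreducible $\pi$ via the Langlands classification. Write $\pi$ as the unique quotient of a standard module $\Ind_P(\tau\otimes\chi)$ with $\tau$ tempered. Using Kudla's filtration and the induction principle, one shows that $\Theta(\pi)$ embeds into (or is a quotient of) an induced representation built from $\Theta(\tau)$. One then shows that the unique Langlands quotient is the only possible irreducible quotient, which gives irreducibility. Injectivity in part (b) follows from the symmetric argument with the roles of $G'$ and $G$ swapped, together with the MVW involution. Concretely, $\pi^{MVW}\cong\pi^\vee$ converts a statement about quotients into one about submodules, so the two multiplicity statements combine to give $\pi_1\cong\pi_2$.

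The main obstacle I expect is the second step: excluding the contributions of the boundary orbits at the exact point $s$ where the doubling degenerate principal series is reducible. This is where $p=2$ or small-rank coincidences cause trouble. If the exponent argument fails, I would instead adopt the Gan--Sun approach. That approach proves the multiplicity bound via a Gelfand--Kazhdan type criterion: a distribution on $G'(W)\times\mathbb{W}$ that is invariant under the action twisted by the MVW involution is shown to be invariant outright. This is proved by descent on nilpotent orbits and uniformly avoids the exponent analysis.
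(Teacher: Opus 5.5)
The paper does not prove this statement; it quotes Waldspurger \cite{Wal90} ($p\neq 2$) and Gan--Takeda \cite{GT16} (all $p$). Gan--Sun \cite{GS17} is the quaternionic case and plays no role for even orthogonal--symplectic and unitary pairs, so it is not an ``injectivity supplement''. Measured against that literature, your outline has genuine gaps.

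\textbf{The reduction.} The reduction you display is false as stated. Suppose $\Theta(\pi)$ has two non-isomorphic irreducible quotients $\sigma_1,\sigma_2$, or $\theta(\pi_1)\cong\theta(\pi_2)\neq 0$ with $\pi_1\not\cong\pi_2$. Then the relevant product of dimensions is $1\cdot 1=1$, so your bound is not violated. Its only content is the case $\pi=\pi'$, i.e.\ $\dim\Hom(\omega,\pi\otimes\sigma)\le 1$, which is strictly weaker than Howe duality.

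What the doubling see-saw can deliver, and what you should aim at, are two statements about $X:=\Theta_{V,\mathbf{W},\boldsymbol{\chi},\psi}(\chi_W)$. The first is the diagonal bound $\dim\Hom_{G'(W)\times G'(W)}(X,\pi\otimes\pi^\vee\chi_V)\le 1$. This excludes two quotients (or one with multiplicity two): MVW makes $\sigma_i^\vee$ (suitably twisted) quotients of the $\bar\psi$-lift of $\pi^\vee\chi_V$, which produces two independent invariant pairings. The second is off-diagonal vanishing for $\pi_1\not\cong\pi_2$. This gives injectivity directly, since the open orbit $C_c^\infty(G'(W))$ has only quotients of the form $\pi\otimes\pi^\vee$ (up to the $\chi_V$-twist). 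So MVW is already needed in your first step, not only in part (b).

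\textbf{Step 2.} Your step 2 bounds the wrong space. The see-saw produces maps out of $X\cong R_{\mathbf{W},\boldsymbol{\chi},\psi}(V,\chi_W)$, and Rallis realizes this as a \emph{submodule} of $I_{\mathbf P}^{\mathbf{G'}}(-\frac{l}{2},\chi_V)$. Restricting $\Hom(I,-)$ to a submodule need not be surjective, so a bound on maps out of $I(s)$ says nothing about maps out of $X$.

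There is also a sign problem. The $t$-th boundary stratum is induced from $P_t\times P_t$ with $GL_t$-exponent $s+\frac t2$. Second adjointness plus Casselman's criterion therefore kills it for tempered $\pi$ only when $\mathrm{Re}\,s>-\frac12$. The point $s=-\frac l2$ lies on the wrong side as soon as $l\ge 1$, i.e.\ when $V$ is small relative to $W$. You must decide, according to the sign of $l$, which member of the pair to double. You also need structural input on $R$, for instance that it is also a quotient of a degenerate principal series at a point where the exponent argument works. This, not $p=2$, is the real obstacle: the Gan--Takeda argument is uniform in $p$, and $p\neq 2$ was a limitation of Waldspurger's method.

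\textbf{The fallback.} Your fallback does not repair this. The Gelfand--Kazhdan/MVW distribution argument is the multiplicity-one theorem of Li--Sun--Tian, not Gan--Sun. It yields only $\dim\Hom(\omega,\pi\otimes\sigma)\le 1$, which was known before Gan--Takeda and does not exclude two distinct irreducible quotients.

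\textbf{Step 3.} Step 3 asserts the non-tempered case (``the Langlands quotient is the only possible irreducible quotient'') without giving a mechanism. This is exactly where Kudla's filtration and a comparison of exponents with the Langlands data have to do real work.
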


Let $W_{-}$ denote the vector space $W$ over $E$ equipped with form $-<\cdot,\cdot>$. Let $\textbf{W}=W\oplus W_{-}$. Use $\textbf{G'(W)}$ to denote the isometry group of $\textbf{W}$, and we have an embedding
\[
i: G'(W)\times G'(W) \to \textbf{G(W)}.
\]  
Note that we have 
\[
\omega_{V,\textbf{W},\boldsymbol{\chi},\psi}\circ i\cong \omega_{V,W,\boldsymbol{\chi},\psi}\otimes (\bar{\omega}_{V,W,\boldsymbol{\chi},\psi}\cdot \chi_V)
\]
as representations of $G'(W)\times G'(W)$.

Consider the following see-saw diagram

\begin{displaymath}
\xymatrix{
\textbf{G'(W)} \ar@{-}[d] \ar@{-}[rd] & G(V)\times G(V) \ar@{-}[d]\\
G'(W)\times G'(W) \ar@{-}[ru]
&\Delta G(V)}
\end{displaymath}
Then the see-saw identity says that
\begin{eqnarray}
\label{see-saw}
&\Hom_{G(V)} (\Theta_{V,W,\boldsymbol{\chi},\psi}(\pi)\otimes \Theta_{V,W,\boldsymbol{\chi},\bar{\psi}}(\pi^\vee \chi_V), \chi_W ) \nonumber  \\
&\cong \Hom_{G'(W)\times G'(W)}(\Theta_{V,\textbf{W},\boldsymbol{\chi},\psi}(\chi_W), \pi\otimes \pi^\vee \chi_V ).
\end{eqnarray}

Inside $\textbf{W}$, set
\[
W^\triangle =\{ (w,w)\in \textbf{W}:w\in W \}, \ \ \ \ W^\nabla=\{(w,-w)\in \textbf{W}: w\in W \}.
\]
Then $W^\triangle$ is a maximal isotropic subspace, and we have a complete polarization
\[
\textbf{W}=W^\triangle \oplus W^\nabla.
\]
The stabilizer of $W^\triangle$ in $\textbf{G'(W)}$ is a Siegel parabolic subgroup $\textbf{P}=M_{\textbf{P}}U_\textbf{P}$. For $s\in \bC$ and a character $\chi$ of $E^\times$, let
\[
I_{\textbf{P}}^{\textbf{G'}}(s,\chi)=\Ind_{\textbf{P}}^{\textbf{G'(W)}} (\chi |\cdot|_E^s)
\]
denote the normalized induced representation of $\textbf{G'}=\textbf{G'(W)}$, where $\chi |\cdot|^s$ is a character of $M_{\textbf{P}}$ via 
\xymatrix{
M_{\textbf{P}}=GL(W^\triangle) \ar[r]^-{\det} &E^\times \\} 
and is extended to $\textbf{P}$ with $U_{\textbf{P}}$ acting trivially.

The Weil representation $\omega_{V,\textbf{W},\boldsymbol{\chi},\psi}$ of $\textbf{G'(W)}\times G(V)$ is realized on $\CS(V\otimes W^\nabla)$, here $'\CS'$ denote the space of Schwartz function space. For $h\in H, \phi\in \CS(V\otimes W^\nabla)$, the action of $G(V)$ is given by
\[
(\omega_{V,\textbf{W},\boldsymbol{\chi},\psi}(h))\phi(x)=\chi_W(\det h)\phi(h^{-1}x).
\]
For $\textbf{g}\in \textbf{G'(W)}$, put
\[
\CF_\phi(\textbf{g})=(\omega_{V,\textbf{W},\boldsymbol{\chi},\psi}(\textbf{g})\phi)(0).
\]
One can check that $\phi\to \CF_\phi$ is a $\textbf{G'}$-equivariant map
\[
\omega_{V,\textbf{W},\boldsymbol{\chi},\psi} \to I_{\textbf{P}}^{\textbf{G'}}(-\frac{l}{2},\chi_V).
\]
Let $R_{\textbf{W},\boldsymbol{\chi},\psi}(V,\chi_W)$ be the image of this map. The following result is due to Rallis (\cite{Ra84}).

\begin{proposition}
The map $\phi\to \CF_\phi$ induces an isomorphism
\begin{displaymath}
\Theta_{V,\textbf{W},\boldsymbol{\chi},\psi}(\chi_W)\stackrel{\cong} {\longrightarrow} R_{\textbf{W},\boldsymbol{\chi},\psi}(V,\chi_W) \subset I_{\textbf{P}}^{\textbf{G'}}(-\frac{l}{2},\chi_V).
\end{displaymath}
\end{proposition}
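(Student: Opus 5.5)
The plan is to prove the proposition in three steps: well-definedness of the map $\phi\mapsto \CF_\phi$ on the coinvariants, $\textbf{G'}$-equivariance into $I_{\textbf{P}}^{\textbf{G'}}(-\frac{l}{2},\chi_V)$, and injectivity on $\Theta_{V,\textbf{W},\boldsymbol{\chi},\psi}(\chi_W)$. Surjectivity onto $R_{\textbf{W},\boldsymbol{\chi},\psi}(V,\chi_W)$ holds by definition. Since $\chi_W$ is a character of $G(V)$, the big theta lift $\Theta_{V,\textbf{W},\boldsymbol{\chi},\psi}(\chi_W)$ is simply the twisted coinvariant space
\[
\omega_{V,\textbf{W},\boldsymbol{\chi},\psi}\big/\mathrm{span}\{\omega(h)\phi-\chi_W(\det h)\phi : h\in G(V),\ \phi\in \CS(V\otimes W^\nabla)\}
\]
(up to the conventions for $\chi_W\circ\det$), with its residual $\textbf{G'}$-action.

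\textbf{Well-definedness and equivariance.} The actions of $\textbf{G'}$ and $G(V)$ commute, and $h^{-1}\cdot 0=0$. Hence
\[
\CF_{\omega(h)\phi}(\textbf{g})=(\omega(h)\omega(\textbf{g})\phi)(0)=\chi_W(\det h)\,\CF_\phi(\textbf{g}),
\]
so $\phi\mapsto\CF_\phi$ kills the span above and factors through $\Theta_{V,\textbf{W},\boldsymbol{\chi},\psi}(\chi_W)$. Right translation gives $\CF_{\omega(\textbf{g}')\phi}(\textbf{g})=\CF_\phi(\textbf{g}\textbf{g}')$. For the left $\textbf{P}$-behaviour I will use the explicit Schrödinger-model formulas on $\CS(V\otimes W^\nabla)$:
\begin{itemize}
\item $U_{\textbf{P}}$ acts by multiplication by $\psi$ of a quadratic expression in $x$, which equals $1$ at $x=0$;
\item $a\in M_{\textbf{P}}=GL(W^\triangle)$ acts by $\chi_V(\det a)|\det a|_E^{m/2}\phi(a^{-1}x)$, up to the usual identification of $W^\nabla$ with the dual of $W^\triangle$.
\end{itemize}
Evaluating at $0$ and comparing $m/2$ with the modulus character $\delta_{\textbf{P}}^{1/2}$ of the Siegel parabolic gives the exponent $-\frac{l}{2}$ in each of the cases $A$, $C''$, $D$. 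This step is routine bookkeeping.

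\textbf{Injectivity.} Suppose $\CF_\phi\equiv 0$. I must show that $\phi$ lies in the $(G(V),\chi_W)$-twisted span above. Equivalently, every $(G(V),\chi_W)$-equivariant functional $\lambda$ on $\CS(V\otimes W^\nabla)$ vanishes on the largest $\textbf{G'}$-stable subspace of $\ker(\mathrm{ev}_0)$. I will stratify $V\otimes W^\nabla\cong\Hom(W^\nabla,V)$ by the moment map $x\mapsto T(x)$, the Gram form of $x$ pulled back to $W^\nabla$, which takes values in the hermitian/symmetric forms on $W^\nabla$. The two groups interact with this stratification as follows:
\begin{itemize}
\item $U_{\textbf{P}}$ acts by the characters $\psi(\mathrm{tr}(b\,T(x)))$. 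Hence vanishing of $\CF_{\omega(\textbf{g})\phi}$ for all $\textbf{g}\in U_{\textbf{P}}wU_{\textbf{P}}$, with $w$ the long Weyl element acting by Fourier transform, amounts to vanishing of the pushforward $T_*(|\phi|\text{-measure})$ paired against characters. By Fourier inversion on the space of forms, all fibre integrals $\int_{T^{-1}(\beta)}\omega(m)\phi$ vanish, for every $\beta$ and every $m\in M_{\textbf{P}}$.
\item $G(V)$ preserves the fibres of $T$.
\end{itemize}
On a fibre $T^{-1}(\beta)$ where $x$ is injective, Witt's theorem makes the fibre a single $G(V)$-orbit. There the space of $(G(V),\chi_W)$-equivariant distributions is at most one-dimensional, spanned by the orbital integral. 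So $\lambda$ restricted to such a stratum is a combination of fibre integrals, which vanish.

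\textbf{Main obstacle.} The hard part is the degenerate strata, where $x$ has nontrivial kernel or $T(x)$ is degenerate. There the fibres split into several orbits, and invariant distributions can have transverse derivatives. I would handle these by induction on the rank of $x$, using the $GL(W^\triangle)$-action from $M_{\textbf{P}}$ to move between strata. On each smaller stratum I would apply Bernstein's localization principle together with the Bernstein--Zelevinsky exact sequence for open/closed pieces. The homogeneity forced by $M_{\textbf{P}}$ (the exponent $-\frac{l}{2}$, versus the homogeneity degrees of normal derivatives) should rule out the extra equivariant distributions supported on lower-rank strata. This is the step of Rallis's argument in \cite{Ra84} that I expect to be most delicate, and I would follow his treatment of it.
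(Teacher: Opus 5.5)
The paper gives no argument for this proposition; it simply attributes it to Rallis \cite{Ra84}. Your checks of well-definedness (via $h^{-1}\cdot 0=0$), $\textbf{G'}$-equivariance and surjectivity are correct. Since you also hand injectivity on the degenerate strata to Rallis, you rest on the same input as the paper. Taken as an argument in its own right, however, your injectivity sketch has a genuine gap, and parts of it point the wrong way.

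\emph{Lower-rank functionals are not ruled out.} The equivariant functionals supported on lower-rank strata exist: $\delta_0\colon\phi\mapsto\phi(0)$ is one. What must be shown is that each of them vanishes on $\ker(\phi\mapsto\CF_\phi)$; for $\delta_0$ this is just $\phi(0)=\CF_\phi(1)$.

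\emph{Your data is insufficient in case $C''$.} The only information you extract from $\CF_\phi\equiv0$ is $T_*(\phi\,dx)=0$. The $M_{\textbf{P}}$-translates add nothing, since $T_*(\omega(m)\phi\,dx)$ is a transform of $T_*(\phi\,dx)$. In cases $A$ and $D$ this loses nothing: $\textbf{P}wU_{\textbf{P}}$ is dense and $\CF_\phi$ is locally constant and left $\textbf{P}$-equivariant, so $\ker(\phi\mapsto\CF_\phi)=\ker T_*$. In case $C''$ it is strictly weaker. There $\textbf{G'}=O(\textbf{W})$ is disconnected, $\textbf{P}\subset SO(\textbf{W})$, and the big cell is dense in only one of the two components of $\textbf{P}\backslash\textbf{G'}$.

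Take $\dim W=2$ and write $x=x_1\otimes e_1+x_2\otimes e_2$, so that $T(x)$ is essentially the scalar $(x_1,x_2)$. Then $T_*(\phi\,dx)$ is unchanged under $\phi\mapsto\phi\circ(1\otimes g)$ for $g\in SL(W^\nabla)$. By contrast, the $Sp(V)$-invariant functionals $\phi\mapsto\int_{V\otimes e}\phi$ ($0\neq e\in W^\nabla$) are permuted by this substitution. So these functionals do not factor through $T_*$ at all. No homogeneity or rank-induction argument on the $U_{\textbf{P}}wU_{\textbf{P}}$-data can control them; you must use $\CF_\phi$ on the other component, where $\textbf{G'}$ acts by partial Fourier transforms.

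\emph{Misplaced tools.} $p$-adic distributions have no transverse derivatives: by the Bernstein--Zelevinsky sequence, distributions supported on a closed $Z$ are just distributions on $Z$. Also, an arbitrary $(G(V),\chi_W)$-equivariant $\lambda$ has no $M_{\textbf{P}}$-homogeneity, so homogeneity has to be exploited on the pushforward side.

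In cases $A$ and $D$, apply Bernstein localization over the space of forms. The real content then becomes: $\CS(T^{-1}(\beta))_{G(V)}$ injects into germs of $T_*(\phi\,dx)$ at $\beta$. Equivalently, the singular behaviour of $T_*(\phi\,dx)$ near degenerate $\beta$ must recover every invariant distribution on the singular fibre. That is exactly what the citation of Rallis supplies.
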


We have the following multiplicity one result by Droschl (\cite{D23}).

\begin{proposition}
For any irreducible smooth representation $\pi$ of $G'(W)$, and any character $\chi$ of $E^\times$, we have
\[
\dim_\BC \Hom_{G'(W)\times G'(W)} (I_{\textbf{P}}^{\textbf{G}}(s,\chi), \pi\otimes \pi^\vee \chi)=1.
\]
\end{proposition}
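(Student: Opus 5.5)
The statement is cited from Droschl (\cite{D23}). Here is how I would approach it, with the upper bound and the existence of a functional treated separately.

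\textbf{Orbit filtration.} Consider the double coset space $\textbf{P}\backslash \textbf{G'(W)}/(G'(W)\times G'(W))$. It is finite, and its orbits are indexed by $t=\dim (\textbf{g}W^\triangle\cap (W\oplus 0))$, with $0\le t\le$ the Witt index of $W$. By Bernstein--Zelevinsky (Mackey theory), restricting $I_{\textbf{P}}^{\textbf{G'}}(s,\chi)$ to $G'(W)\times G'(W)$ gives a finite filtration whose bottom piece is the open orbit $t=0$.

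\textbf{The open orbit.} The stabiliser of the open orbit is the twisted diagonal $\{(g,g)\}$, and the restricted inducing character becomes $\chi(\det g)$, which is trivial up to the twist. The open-orbit subrepresentation is therefore $\mathrm{c\text{-}Ind}_{\Delta G'}^{G'\times G'}(\chi\circ\det)\cong C_c^\infty(G'(W))$, twisted by $\chi$. Frobenius reciprocity gives
\[
\Hom_{G'\times G'}\bigl(C_c^\infty(G'(W)),\pi\otimes\pi^\vee\chi\bigr)\cong \Hom_{G'(W)}(\pi\otimes\pi^\vee,\BC),
\]
which is one-dimensional by Schur's lemma. The corresponding functional is the doubling zeta integral $Z(s,f,v,v^\vee)=\int_{G'(W)} f_s(\delta\cdot(g,1))\langle\pi(g)v,v^\vee\rangle\,dg$.

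\textbf{Existence for all $s$.} The integral converges for $\mathrm{Re}\, s\gg0$ and continues meromorphically. Dividing by its leading Laurent coefficient at a given $s_0$ produces a nonzero element of the Hom space at every $s_0$. This gives $\dim\ge1$.

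\textbf{Upper bound.} Each closed stratum $t>0$ is a parabolically induced representation $\Ind_{P_t\times P_t}^{G'\times G'}$ of a twisted $\chi|\cdot|^{s+\ast}\circ\det$ on $GL_t$, tensored with $C_c^\infty(G'(W_t))$, where $W_t$ is the anisotropic-to-$t$ complement. By second adjointness, a $G'\times G'$ map from this stratum to $\pi\otimes\pi^\vee\chi$ factors through the Jacquet module $r_{\bar P_t}(\pi)$. Such a map is nonzero only if $r_{\bar P_t}(\pi)$ has a constituent of the form $\chi^{-1}|\cdot|^{-s-\ast}\otimes\sigma$. This bounds the dimension generically in $s$, and the open orbit then gives exactly one.

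\textbf{Main obstacle.} The hard part is the finitely many exceptional $s$ (such as $s=-l/2$ here) where these boundary Jacquet-module conditions are satisfied. At such points one must rule out two things: boundary functionals that survive, and extensions in the filtration that would raise the Hom dimension. I would first try to show that every functional is determined by its restriction to the open orbit, by showing that any functional vanishing on $C_c^\infty(G'(W))$ is zero. The method would be an inductive argument on $t$ using derivatives, \`a la Bernstein--Zelevinsky and Droschl's use of Jacquet modules of $\pi$. I would combine this with the exactness of the leading-term zeta functional to see that no extra invariants appear. This inductive control of the boundary is where the restriction to the unitary and even orthogonal--symplectic cases enters.
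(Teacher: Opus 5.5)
\textbf{There is a genuine gap: the upper bound $\dim\le 1$ at the exceptional values of $s$ is not proved, and the strategy you sketch for it fails.}

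Your open-orbit computation, the existence of a functional at every $s$ via the leading Laurent coefficient of the doubling zeta integral, and the Mackey/second-adjointness bound are all fine. Together, though, they only give $\dim=1$ for $q^{-s}$ outside a finite set depending on $\pi$; this is the standard Piatetski-Shapiro--Rallis picture. The paper does not prove this proposition; it imports it from Droschl. The whole content of that result, the Kudla--Rallis conjecture, is that multiplicity one persists at the remaining points. The paper needs it at the single point $s=-l/2$ for \emph{every} irreducible $\pi$, so a statement that is generic in $s$ is of no use there. At those points your proposal offers only a plan.

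Moreover, the plan cannot work as stated. Let $I=I_{\textbf{P}}^{\textbf{G}}(s_0,\chi)$, let $I_0\subset I$ be the sections supported in the open cell $\textbf{P}(G'(W)\times G'(W))$, and let $\sigma=\pi\otimes\pi^\vee\chi$. You want to show that every functional vanishing on $I_0$ is zero, i.e.\ that $\Hom(I/I_0,\sigma)=0$.

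Suppose the zeta integrals have a pole of order $k\ge 1$ at $s_0$. This does happen: already for unramified $\pi$ the spherical zeta integral is $L(s+\frac12,\pi\times\chi)$ divided by a product of abelian $L$-factors. For $f\in I_0$, choose the holomorphic extension $f_s$ supported in the open cell whose restriction to $G'(W)\times 1$ is a fixed $\phi\in C_c^\infty(G'(W))$. Then $Z(s,f_s,v,v^\vee)=\int\phi(g)\langle\pi(g)v,v^\vee\rangle\,dg$ is constant in $s$. Two holomorphic extensions of $f$ differ by $(s-s_0)$ times a holomorphic section, so the leading coefficient $\lim_{s\to s_0}(s-s_0)^kZ(s,f_s,v,v^\vee)$ does not depend on the choice. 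It therefore vanishes on $I_0$.

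So the element of $\Hom(I,\sigma)$ produced by your own existence argument is nonzero but lies in $\Hom(I/I_0,\sigma)$. The filtration bound $\dim\Hom(I/I_0,\sigma)+\dim\Hom(I_0,\sigma)$ is then at least $2$ at such $s_0$.

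What has to be proved at these points is the opposite of your plan. One must show that $\Hom(I/I_0,\sigma)$ is one-dimensional, spanned by this leading-coefficient functional, and that the open-orbit functional does \emph{not} extend to $I$. More generally, one must show that the boundary strata and the open orbit, which can simultaneously admit nonzero maps to $\sigma$, glue to at most one functional. That is exactly the new input in Droschl's work, and nothing in your proposal supplies it.
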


Note that
\begin{align*}
& \Hom_{G'(W)\times G'(W)\times G(V)} (\omega_{V,W,\boldsymbol{\chi},\psi}\otimes \bar{\omega}_{V,W,\boldsymbol{\chi},\psi} \otimes \pi \chi_V^{-1}\otimes \pi^\vee , \chi_W ) \\ 
&= \Hom_{G(V)} (\Theta_{V,W,\boldsymbol{\chi},\psi}(\pi)\otimes \Theta_{V,W,\boldsymbol{\chi},\bar{\psi}}(\pi^\vee \chi_V), \chi_W )  \\
&= \Hom_{G'(W)\times G'(W)}(\Theta_{V,\textbf{W},\boldsymbol{\chi},\psi}(\chi_W), \pi\otimes \pi^\vee \chi_V ).
\end{align*}
By Rallis' result and the above multiplicity one theorem, the dimension of these Hom spaces is at most one, and it is equal to one precisely when $\Theta_{V,W,\boldsymbol{\chi},\psi}(\pi)\neq 0$ by Lemma 6.1 in \cite{GI14}. 

Take a nonzero element $\ell\in \Hom_{G'(W)\times G'(W)\times G(V)} (\omega_{V,W,\boldsymbol{\chi},\psi}\otimes \bar{\omega}_{V,W,\boldsymbol{\chi},\psi} \otimes \pi \chi_V^{-1}\otimes \pi^\vee , \chi_W ) $. Let 
\[
R:=\{ \varphi\otimes v^\vee\in \omega_{V,W,\boldsymbol{\chi},\psi}\otimes \pi^\vee: \ell(\varphi\otimes \varphi'\otimes v\otimes v^\vee)=0 \ \ \forall \varphi'\in \bar{\omega}_{V,W,\boldsymbol{\chi},\psi}, v\in \pi\chi_V^{-1}  \}
\]
be the radical of $\ell$. Put $H_{\ell,\psi}(\pi):= (\omega_{V,W,\boldsymbol{\chi},\psi}\otimes \pi^\vee)/R$.

\begin{thm}
We have 
\[
H_{\ell,\psi}(\pi)\cong \theta_{V,W,\boldsymbol{\chi},\psi}(\pi).
\]
\end{thm}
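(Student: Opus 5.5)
The strategy is to show that $\ell$ factors through $\theta(\pi)\otimes\theta(\pi)^{*}$-type data and that its radical is exactly the kernel of the natural map $\omega_{V,W,\boldsymbol{\chi},\psi}\otimes\pi^\vee\to\Theta_{V,W,\boldsymbol{\chi},\psi}(\pi)\to\theta_{V,W,\boldsymbol{\chi},\psi}(\pi)$. First, for fixed $\varphi'\in\bar\omega$ and $v\in\pi\chi_V^{-1}$, the functional $\varphi\otimes v^\vee\mapsto \ell(\varphi\otimes\varphi'\otimes v\otimes v^\vee)$ is $G'(W)$-invariant in the first and fourth slots. It therefore factors through the $G'(W)$-coinvariants $(\omega\otimes\pi^\vee)_{G'(W)}$. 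This space is canonically $\Theta_{V,W,\boldsymbol{\chi},\psi}(\pi)$, because $\Hom_{G'}(\omega,\pi)$ gives the maximal $\pi$-isotypic quotient $\pi\otimes\Theta(\pi)$, and hence $(\omega\otimes\pi^\vee)_{G'}\cong\Theta(\pi)$ as $G(V)$-modules. Since the radical $R$ contains the kernel $K$ of $p:\omega\otimes\pi^\vee\to\Theta(\pi)$, we get $H_{\ell,\psi}(\pi)=\Theta(\pi)/\bar R$, where $\bar R=R/K$ is a $G(V)$-submodule. By the first identification displayed before the theorem, $\ell$ corresponds to a nonzero $G(V)$-invariant pairing
\[
B:\Theta_{V,W,\boldsymbol{\chi},\psi}(\pi)\times\Theta_{V,W,\boldsymbol{\chi},\bar\psi}(\pi^\vee\chi_V)\to\chi_W,
\]
and $\bar R$ is the left radical of $B$. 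In particular $\Theta(\pi)\neq0$ and $H_{\ell,\psi}(\pi)\neq0$.

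Second, we must show that $\bar R$ is exactly the kernel $J$ of $\Theta(\pi)\to\theta(\pi)$, that is, the intersection of all maximal subrepresentations. The quotient $\Theta(\pi)/\bar R$ embeds, via $B$, into $\Hom(\Theta(\pi^\vee\chi_V),\chi_W)$, a smooth-dual-type module. Here we use multiplicity one: by Rallis' result and Droschl's theorem the Hom space containing $\ell$ is one dimensional. On the other hand, Howe duality (the earlier theorem) gives that $\theta(\pi)$ is irreducible. Moreover, the standard compatibility $\theta_{\bar\psi}(\pi^\vee\chi_V)\cong\theta_\psi(\pi)^\vee\otimes\chi_W$ (up to the twist in \cite{GI14}) produces a nonzero pairing $B_0$ which factors through $\theta(\pi)\otimes\theta(\pi)^\vee\chi_W\to\chi_W$. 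By multiplicity one, $B=cB_0$ with $c\ne0$. Hence the left radical of $B$ is the preimage of the left radical of the canonical pairing on $\theta(\pi)\times\theta(\pi)^\vee$, which is zero because $\theta(\pi)$ is irreducible and the canonical pairing is nondegenerate. So $\bar R=J$, and $H_{\ell,\psi}(\pi)\cong\Theta(\pi)/J=\theta(\pi)$.

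The main obstacle is the second step: producing the nonzero pairing $B_0$ that factors through the small theta lifts, and identifying $\theta_{\bar\psi}(\pi^\vee\chi_V)$ with the appropriate contragredient of $\theta_\psi(\pi)$. For this I would invoke the known behaviour of theta lifts under contragredient and MVW involution (as in \cite{GI14}, where Lemma 6.1 is proved by essentially this doubling argument). I would verify the twists by $\chi_V,\chi_W$ carefully so that the composite $\Theta(\pi)\otimes\Theta(\pi^\vee\chi_V)\twoheadrightarrow\theta(\pi)\otimes\theta(\pi)^\vee\chi_W\to\chi_W$ lands in the same one-dimensional Hom space as $\ell$. Once this is in place, multiplicity one forces $\ell$ to be a multiple of it, and the radical computation above is formal.
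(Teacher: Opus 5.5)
Your proposal is correct and follows essentially the same route as the paper. You factor $\ell$ through $\Theta_\psi(\pi)\otimes\Theta_{\bar\psi}(\pi^\vee\chi_V)$ and produce a nonzero $\chi_W$-equivariant pairing on the small theta lifts; your identification $\theta_{\bar\psi}(\pi^\vee\chi_V)\cong\theta_\psi(\pi)^\vee\chi_W$ is, given irreducibility, equivalent to the nonzero form the paper takes from Corollary 18.4 of \cite{GKT}. Multiplicity one then makes $\ell$ proportional to the pullback of that pairing, so its radical is the kernel of $\omega\otimes\pi^\vee\to\theta(\pi)$; the only difference is that you compute this radical directly, while the paper phrases the same comparison as a contradiction with $R\subsetneq R_1$.
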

\begin{proof}
Since 
\[
\Theta_{V,W,\boldsymbol{\chi},\psi}(\pi)=(\omega_{V,W,\boldsymbol{\chi},\psi}\otimes \pi^\vee)_{G(V)},
\]
it follows that $H_{\ell,\psi}(\pi)$ is a quotient of $\Theta_{V,W,\boldsymbol{\chi},\psi}(\pi)$. Hence it is of finite length. By the Howe duality, $\Theta_{V,W,\boldsymbol{\chi},\psi}(\pi)$ has a unique quotient $\theta_{V,W,\boldsymbol{\chi},\psi}(\pi)$. Thus $\theta_{V,W,\boldsymbol{\chi},\psi}(\pi)$ is also the unique quotient of $H_{\ell,\psi}(\pi)$. Hence we can write
\[
\theta_{V,W,\boldsymbol{\chi},\psi}(\pi) = (\omega_{V,W,\boldsymbol{\chi},\psi}\otimes \pi^\vee)/R_1,
\]
where $R\subseteq R_1\subset \omega_{V,W,\boldsymbol{\chi},\psi}\otimes \pi^\vee$.

If $H_{\ell,\psi}(\pi)$ is not irreducible, then $R_1$ will contain $R$ properly. Now consider
\[
H_{\ell, \bar{\psi}}(\pi^\vee \chi_V):=(\bar{\omega}_{V,W,\boldsymbol{\chi},\psi} \otimes \pi \chi_V^{-1} )/R',
\]
where $R'\subset \bar{\omega}_{V,W,\boldsymbol{\chi},\psi} \otimes \pi \chi_V^{-1}$ is defined similarly as $R$. Also $\theta_{V,W,\boldsymbol{\chi},\bar{\psi}}(\pi^\vee \chi_V)$ is a quotient of $H_{\ell,\bar{\psi}}(\pi^\vee \chi_V)$ and can be written as 
\[
\theta_{V,W,\boldsymbol{\chi},\bar{\psi}}(\pi^\vee \chi_V)=(\bar{\omega}_{V,W,\boldsymbol{\chi},\psi} \otimes \pi \chi_V^{-1} )/R_1'
\]
with $R'\subseteq R_1' \subset \bar{\omega}_{V,W,\boldsymbol{\chi},\psi} \otimes \pi \chi_V^{-1} $.

By Corollary 18.4 in \cite{GKT}, there is a natural nonzero linear form in 
\[
\Hom_{G(V)}(\theta_{V,W,\boldsymbol{\chi},\psi}(\pi)\otimes \theta_{V,W,\boldsymbol{\chi},\bar{\psi}}(\pi^\vee \chi_V), \chi_W).
\]
View this nonzero linear form as an element in 
\[
\Hom_{G(V)}((\omega_{V,W,\boldsymbol{\chi},\psi}\otimes \pi^\vee)/R_1 \otimes (\bar{\omega}_{V,W,\boldsymbol{\chi},\psi} \otimes \pi \chi_V^{-1} )/R_1' ,\chi_W),
\]
which can be naturally extended to a nonzero liner form in 
\[
\Hom_{G(V)}( H_{\ell,\psi}(\pi)\otimes H_{\ell,\bar{\psi}}(\pi^\vee \chi_V),\chi_W ).
\]
Extend this nonzero linear form further to an element in
\[
\Hom_{G(V)}( \Theta_{V,W,\boldsymbol{\chi},\psi}(\pi)\otimes \Theta_{V,W,\boldsymbol{\chi},\bar{\psi}}(\pi^\vee \chi_V), \chi_W  ).
\]
This nonzero element will correspond to a nonzero element $\ell'$ in
\[
\Hom_{G'(W)\times G'(W)\times G(V)} (\omega_{V,W,\boldsymbol{\chi},\psi}\otimes \bar{\omega}_{V,W,\boldsymbol{\chi},\psi} \otimes \pi \chi_V^{-1}\otimes \pi^\vee , \chi_W ).
\]
As this Hom space has dimention at most one, there exists a nonzero scalar $\lambda$, such that
\[
\ell=\lambda \ell'.
\]
But, by the construction of $\ell'$, the radical is $R_1$, which is strictly larger than $R$, and we got a contradiction, which will imply that $H_{\ell,\psi}(\pi)$ is irreducible.

So $H_{\ell,\psi}(\pi)$ is an irreducible quotient of $\Theta_{V,W,\boldsymbol{\chi},\psi}(\pi)$, it has to be $\theta_{V,W,\boldsymbol{\chi},\psi}(\pi)$ by Howe duality, and the conclusion follows.
\end{proof}



\begin{thebibliography}{SK}


\bibitem[D23]{D23}
J.Droschl. {\em Proof of a conjecture of Kudla and Rallis on quotients of degenerate principal series.}  \textit{Advances in Mathematics.} Vol.464, 110145, 2025.

\bibitem[GI14]{GI14}
Wee Tech Gan and Atsushi Ichino. {\em Formal degrees and local theta correspondence.} \textit{Inventiones Mathematicae.} Vol 195, 509-672, 2014.

\bibitem[GKT]{GKT}
Wee Tech Gan, S. Kudla and S. Takeda. {\em The local theta correspondence.} Preprint.

\bibitem[GS17]{GS17}
Wee Teck Gan and Binyong Sun. {\em The Howe duality conjecture: quaternionic case.} \textit{Representation theory, number theory, and invariant theory.} Progr. Math., 323, Birkhauser/Springer, Cham, 175-192, 2017.

\bibitem[GT16]{GT16}
Wee Teck Gan and S. Takeda. {\em A proof of the Howe duality conjecture.} \textit{J. Amer. Math. Soc.} 29, no.2, 473-493. 2016

\bibitem[How79]{How79}
Roger Howe. {\em $\theta$-series and invariant theory.} \textit{Automorphic forms, representations and L-functions.} Proc. Sympos. Pure Math.,XXXIII, Amer. Math. Soc., Providence, R.I., 1979, 275-285.

\bibitem[How89]{How89} 
Roger Howe. {\em Transcending classical invaraint theory.} \textit{J. Amer. Math. Soc.} 2, no.3, 535-552, 1989.

\bibitem[L90]{L90}
Jianshu Li. {\em Theta lifting for unitary representations with nonzero cohomology.} \textit{Duke. Math. Journal} Vol.61, no.3, 913-937, 1990


\bibitem[Wal90]{Wal90}
J.-L. Waldspurger. {\em Démonstration d’une conjecture de dualité de Howe dans le cas p-adique, $p\neq 2$.} Festschrift in honor of I.I.Piatetski-Shapiro on the occasion of his sixtieth birthday, Part I (Ramat Aviv, 1989), Israel Math. Conf. Proc., 2, Weizmann, Jerusalem, 267-324, 1990.

\bibitem[Ra84]{Ra84}
S.Rallis. {\em On the Howe duality conjecture.} \textit{Compos. Math.} 51, 339-399, 1984.

\end{thebibliography}
\end{document}